\newtheorem{theorem}{Theorem}[section]
\newtheorem{lemma}{Lemma}[section]
\DeclareMathOperator{\Res}{Res}
\title{On Algebraic-Geometry Approach to Ribaucour Transformations}
\date{}
\author{Evgeniy Glukhov\\ e-mail: \href{mailto:evgeniy.glukhov.eg@gmail.com}{evgeniy.glukhov.eg@gmail.com} }
\affil{L. D. Landau Institute for Theoretical Physics
of Russian Academy of Sciences}
\begin{document}

\maketitle

\begin{abstract}
We develop the idea of using an algebraic-geometry approach to classical differential geometry problems. Consider an orthogonal net constructed according to algebraic-geometric data we obtain a set of smooth orthogonal nets that are Ribaucour transformations of the initial orthogonal net.  
\end{abstract}

\section{Introduction}

Krichever, in his work \cite{Krichever}, introduced a remarkable approach for constructing curvilinear orthogonal coordinates in Euclidean space according to algebraic-geometric data. This method was extended to construct orthogonal coordinates in various Riemannian spaces in the subsequent works \cite{berdinsky}, \cite{Glukhov}. In this paper, we study transformations of initial algebraic-geometric data that result in Ribaucour transformations of orthogonal nets.
  
A smooth map $x(u):U\rightarrow\mathbb{R}^{n+N}$, $U\subset\mathbb{R}^n$, $u=(u^1,\dots,u^n)$, $x=(x^1,\dots x^{n+N}),$ is called \textit{an $n$-dimensional orthogonal net of codimension $N$} if
\begin{enumerate}
\item $\partial_{i}\partial_j x(u) = c_{ij}^i(u) \partial_i x(u)+c_{ij}^j(u) \partial_j x(u),\quad i\neq j,$
\item $\partial_i x(u)\cdot  \partial_j x(u) = 0,\quad i\neq j,$
\end{enumerate}
where $x\cdot y$ is the standard Euclidean scalar product in $\mathbb{R}^{n+N}.$  
  
If $N=0$, then orthogonal nets are curvilinear orthogonal coordinates in $\mathbb{R}^n$. Krichever was the first who proposed algebraic-geometry methods for constructing orthogonal coordinate systems in $\mathbb{R}^n$. In his work \cite{Krichever}, such systems were described in terms of the $\theta$-functions of smooth algebraic curves.

If $N=1$, then orthogonal nets are hypersurfaces that parametrized by curvature line coordinates. An important example of such hypersurfaces is a constant curvature hypersurface with orthogonal coordinates on it. An extension of Krichever's method to constant curvature hypersurfaces was developed in the work \cite{berdinsky}.  

For orthogonal nets of an arbitrary codimension $N$, a generalization of Krichever's construction was proposed in \cite{Glukhov}. The geometry of normal bundle of an orthogonal net according to algebraic-geometric data was also studied in this work.

All the formulae for the described constructions are expressed in terms of the $\theta$-function of a smooth algebraic curve. Mironov and Taimanov in \cite{Mironov} modified Krichever's construction for singular algebraic curves. They constructed curvilinear orthogonal coordinates by algebraic-geometry methods considering a singular algebraic curve that results in elementary functions description.

The geometry of orthogonal nets is closely connected to the theory of integrable systems. Integrability of the Lam\'e equations (that describe flat diagonal metrics) via the dressing method was proved by Zakharov in \cite{zakharov}. Further application of diagonal metrics includes, for example, works on hydrodynamic type systems, Frobenius manifolds, and the theory of compatible metrics (see \cite{Tsarev}, \cite{Mokhov_1990}, \cite{ferapontov}, \cite{dubrovin}, \cite{Mokhov_2000}, \cite{Mokhov_2004}, \cite{Mokhov_2010}). 

We study Ribaucour transformations of orthogonal nets that are obtained by algebraic-geometry methods. The theory of Ribaucour transformation was developed by classics, including Bianchi, and it has various extensions (see \cite{Tojeiro}, \cite{Hertchin}). 

Recall here an analytical description for Ribaucour transformations of orthogonal nets. A pair of $n$-dimensional orthogonal nets $x(u)$, $\tilde{x}(u)$ is called \textit{a Ribaucour pair} if for some functions $\lambda_1(u),\dots, \lambda_n(u)$ we have
\begin{equation}
\label{RibTrans}
\partial_i \tilde{x}(u)=\lambda_i(u) \left(\partial_i x(u) - 2 \dfrac{\partial_i x(u)\cdot\delta x(u)}{\delta x(u)\cdot\delta x(u)}\delta x(u)\right),
\end{equation}
where $\delta x(u) = \tilde{x}(u)-x(u).$ The orthogonal net $\tilde{x}(u)$ is called \textit{a Ribaucour transformation} of $x(u)$.

The aim of this paper is to prove that certain transformations of algebraic-geometric data result in Ribaucour transformations of corresponding orthogonal nets. So we get a class of algebraic-geometric Ribaucour pairs within the proposed approach. It is an open question if any Ribaucour pair is in this class, and this will be the object of further studies. 

Ribaucour transformations possess an important permutability property. It states that for any orthogonal net $x(u)$ and its two Ribaucour transformations $x_1(u)$ and $x_2(u)$ there exists a fourth orthogonal net $x_{12}(u)$ that is a Ribaucour transformation of $x_1(u)$ and $x_2(u)$. A set of all four orthogonal nets is called \textit{a Bianchi quadrilateral}. Moreover, one can choose 1-parameter family of simultaneous Ribaucour transformations $x_{12}$ if corresponding points of $x,\,x_1,\,x_2,\,x_{12}$ are concircular \cite{bianchi}.

The permutability property has extensions to a greater number of initial Ribaucour transformations \cite{Ganzha}, \cite{BURSTALL}, \cite{Tojeiro2}. It plays a key role in the theory of discrete orthogonal nets \cite{bobenko}. Let us also note that there is an algebraic-geometry approach for discrete orthogonal nets \cite{Akhmetshin}.

To be definite, recall a general permutability property for Ribaucour transformations. Given $l$ Ribaucour transformations of an orthogonal net, a generic choice of $\dfrac{l(l-1)}{2}$ Ribaucour transformations of every pair of them results in unique Bianchi $l$-cube, i.e. $2^l$ orthogonal nets with the combinatorics of $l$-cube defined by Ribaucour pairs. We obtain an example of Bianchi $l$-cube for the initial orthogonal net $x(u)$ that is determined by a certain transformation of algebraic-geometric data.

\section{Algebraic-Geometry Approach to Orthogonal Nets}

We define here $n$-dimensional algebraic-geometric orthogonal nets of codimension $N$. The construction is based on \cite{Glukhov} and is a natural development of the Krichever construction \cite{Krichever}. 

Consider a smooth complex curve $\Gamma$ of genus $g$, fix $l\in\mathbb{N}$ and three divisors on the curve: $P_1+\dots+P_n,$  $\gamma=\gamma_1+\dots+\gamma_{g+l+N-1},$ $R=R_1+\dots+R_{l+N}$,  and introduce a local parameter $k_j^{-1}$ in the neighborhood of every point $P_j$, $k_j^{-1}(P_j) = 0$. Denote all the algebraic-geometric data by $S$:
$$S=\{\Gamma,\, g;\; \{P_j,\,k_j^{-1}\}_{j=1}^{n};\; R_1,\dots, R_{l+N};\gamma_1,\dots,\gamma_{g+l+N-1}\}.$$

A function $\psi(u^1,\dots,u^n;\;Q | d)$, $Q\in\Gamma$, $d=(d_1,\dots,d_{l+N})\in\mathbb{R}^{l+N},$ with the following analytical properties:
\begin{enumerate}
\item in the neighborhood of every point $P_j$ the function has an essential singularity: 
$$\psi(u;Q)=e^{k_ju^j}\left(\xi^j_0(u)+\xi^j_1(u) k_j^{-1}+\dots			\right);$$
\item the function is meromorphic outside $P_1+\dots+P_n$ and has simple poles only at the points of the divisor $\gamma$;
\item $\psi(u;R_{\alpha})=d_{\alpha}\in\mathbb{R},\; \alpha = 1,\dots,l+N;$
\end{enumerate}
is called \textit{an $n$-point Baker--Akhiezer function.} 

A Baker--Akhiezer function exists and unique, so it vanishes if the vector $d$ is equal to zero. One can express Baker--Akhiezer function in terms of the $\theta$-function of the curve $\Gamma$ (see \cite{Krichever}).

Consider a curve $\Gamma$ with an holomorphic involution $\sigma:\Gamma\rightarrow\Gamma$, $\sigma^2=id$, such that
\begin{enumerate}
\item the local parameters $k_i$ are odd: $k_i\left(\sigma(P)\right)=-k_i(P)$;
\item the involution $\sigma$ has only $2(n+N)$ fixed points: $P_1,\dots , P_n$, $R_{l+1},\dots, R_{l+N},$ and we denote the remaining points by $Q_1,\dots , Q_{n+N}$.
\end{enumerate}

Choose such the divisors $\gamma$ and $R$ on $\Gamma$ that there is an even (with respect to the involution $\sigma$) meromorphic differential $\Omega$ on $\Gamma$ defined by the divisors in the following way:

\begin{enumerate}
\item it equals to zero at $n+2(g+N+l-1)$ points: $\left(\Omega\right)_0= P_1+\dots+P_n +\gamma + \sigma(\gamma)$; 
\item it has $n+2N+2l$ simple poles: $\left(\Omega\right)_{\infty}= Q_1+\dots+Q_{n+N}+R_1+\dots+R_{l+N}+\sigma(R_1)+\dots+\sigma(R_l)$;
\item all the residues of $\Omega$ are equal to 1: $\Res\limits_{Q_1}\Omega=\dots=\Res\limits_{Q_{n+N}}\Omega=1.$
\end{enumerate}

We denote the residues of $\Omega$ at the points $R_1,\sigma R_1,\dots,R_l,\sigma R_l$ by $r_1,\dots,r_l$:
\begin{equation}
    \label{r-residues}
    r_{\alpha} = \Res\limits_{R_{\alpha}}\Omega = \Res\limits_{\sigma R_{\alpha}}\Omega,\quad \alpha=1,\dots,l.
\end{equation}
The notion is correct since $\Omega$ is even.

Let us introduce reality conditions to get real-valued functions. 

\begin{lemma}

Let $\Gamma$ be a curve with antiholomorphic involution $\tau : \Gamma \rightarrow \Gamma$ such that the points $P_1,\dots,P_n,Q_1,\dots,Q_{n+N}, R_{1}, \dots ,R_{l+N}$ are fixed points of the involution $\tau$, the divisor $\gamma$ is invariant under $\tau$: $\tau(\gamma)=\gamma$, and local parameters are antiholomorphic: $k_i(\tau(P))=\overline{k_i(P)}$.

Then the Baker-Akhiezer function satisfies the relation
$$\psi(u,Q | d)=\overline{\psi(u,\tau(Q)| d)}.$$

\end{lemma}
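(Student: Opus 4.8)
The plan is to exploit the uniqueness of the Baker--Akhiezer function. Define a new function $\chi(u,Q):=\overline{\psi(u,\tau(Q)\,|\,d)}$ and verify that it satisfies exactly the three defining analytical properties of the $n$-point Baker--Akhiezer function with the same data $S$ and the same vector $d$; then uniqueness forces $\chi=\psi$, which is the claimed identity. So the proof reduces to checking the three properties one at a time.

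First I would check the essential-singularity condition near each $P_j$. Since $P_j$ is a fixed point of $\tau$ and $k_j(\tau(P))=\overline{k_j(P)}$, the local parameter $k_j^{-1}$ is compatible with complex conjugation near $P_j$; writing $\psi=e^{k_ju^j}\sum_m \xi^j_m(u)k_j^{-1\,m}$ and using that $u^j\in\mathbb{R}$, one gets $\overline{\psi(u,\tau(Q))}=e^{k_j(Q)u^j}\sum_m \overline{\xi^j_m(u)}\,k_j^{-1}(Q)^m$, i.e. $\chi$ has an essential singularity of the required form at $P_j$, with coefficients $\overline{\xi^j_m}$ (in particular $\chi$ inherits the normalization implicit in $\psi$). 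Second, for the pole structure: $\psi$ is meromorphic away from the $P_j$ with at most simple poles on $\gamma$; conjugation is an anti-automorphism and precomposition with the antiholomorphic $\tau$ turns the antiholomorphic dependence back into a holomorphic one, so $\chi$ is again meromorphic off $\{P_j\}$, and its poles lie on $\tau(\gamma)=\gamma$, still simple. Third, the reality/normalization at the points $R_\alpha$: since each $R_\alpha$ is fixed by $\tau$, $\chi(u,R_\alpha)=\overline{\psi(u,\tau(R_\alpha))}=\overline{\psi(u,R_\alpha)}=\overline{d_\alpha}=d_\alpha$ because $d_\alpha\in\mathbb{R}$. Hence $\chi$ satisfies all three conditions and $\chi\equiv\psi$.

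The step I expect to be the main (though still modest) obstacle is making the second point fully rigorous, i.e. verifying that $Q\mapsto\overline{\psi(u,\tau(Q))}$ is genuinely holomorphic/meromorphic in $Q$ (not merely antiholomorphic) and that the pole divisor is \emph{exactly} controlled by $\gamma$ rather than something larger. This is where one must be careful about what ``$\psi$ meromorphic'' means in a chart: in a local coordinate $z$ centered at a point of $\Gamma$, $\tau$ acts as $z\mapsto\overline{z}$ up to a holomorphic change, so $\psi(u,\tau(Q))$ is an antiholomorphic function of $z$, and applying the outer bar restores holomorphy. One should phrase this via local coordinates chosen compatibly with $\tau$ (which exist precisely because $\tau$ is antiholomorphic), and note that the essential singularities at the $P_j$ are the only non-meromorphic behavior, which was already handled in the first step. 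Once this local analytic bookkeeping is in place, the invariance $\tau(\gamma)=\gamma$ and the fixedness of the $R_\alpha$ and $P_j$ do the rest, and the conclusion follows from the existence-and-uniqueness statement for the Baker--Akhiezer function recalled above.
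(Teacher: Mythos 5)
Your proof is correct and is exactly the standard uniqueness argument for such reality lemmas: the paper states this lemma without proof, but the argument it implicitly relies on (as in Krichever's original construction) is precisely your verification that $\overline{\psi(u,\tau(Q)\,|\,d)}$ satisfies the three defining properties of the Baker--Akhiezer function for the same data, followed by the existence-and-uniqueness statement. Your attention to the holomorphy of $Q\mapsto\overline{\psi(u,\tau(Q))}$ and to the reality of $u^j$ and $d_\alpha$ covers the only points where care is needed.
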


For the proper choice of algebraic-geometric data that is described above, we have the following theorem.

\begin{theorem}[\cite{Glukhov}]
\label{theorem1}
Functions $x^k(u)=\psi(u, Q_k),$ $k = 1, \dots , n+N$ define a real  $n$-dimensional orthogonal net in $(n+N)$-dimensional Euclidean space.
\end{theorem}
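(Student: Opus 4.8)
The plan is to verify the two defining conditions of an orthogonal net --- the point equation $\partial_i\partial_j x = c_{ij}^i\partial_i x + c_{ij}^j\partial_j x$ for $i\neq j$, and the orthogonality $\partial_i x\cdot\partial_j x = 0$ for $i\neq j$ --- for the functions $x^k(u)=\psi(u,Q_k)$, $k=1,\dots,n+N$. Throughout, the key device is to study the $(n+N)$-component vector-valued Baker--Akhiezer function $x=(x^1,\dots,x^{n+N})$ by comparing, at each marked point, two meromorphic functions on $\Gamma$ built from $\psi$ and checking that they have the same singular parts and the same values at the real normalization points $R_\alpha$; by the uniqueness statement for Baker--Akhiezer-type functions (they are determined by their singularities on $P_1+\dots+P_n$, poles on $\gamma$, and values at $R$, and vanish when these vanish), the two functions coincide and the desired identity follows.

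First I would establish the point equation. For fixed $i\neq j$, consider $\partial_i\partial_j\psi(u,Q)$ as a function of $Q$: away from $P_i$ and $P_j$ it is meromorphic with poles only on $\gamma$ (differentiation in $u$ preserves the pole divisor), while near $P_m$ with $m\neq i,j$ it has no essential singularity growth beyond that of $\psi$, and near $P_i$, $P_j$ it picks up an extra factor of $k_i$, resp. $k_j$. The functions $\partial_i\psi$ and $\partial_j\psi$ have the same structure. One then seeks functions $c_{ij}^i(u)$, $c_{ij}^j(u)$ so that $\partial_i\partial_j\psi - c_{ij}^i\partial_i\psi - c_{ij}^j\partial_j\psi$ has no singularity at $P_i$ or $P_j$ at all: matching the leading coefficient of the $e^{k_iu^i}k_i$ term near $P_i$ fixes $c_{ij}^i=\partial_j\xi_0^i/\xi_0^i$ (and symmetrically for $c_{ij}^j$), using the expansions in condition (1) of the Baker--Akhiezer definition. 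Since $\partial_i\partial_j\psi$, $\partial_i\psi$, $\partial_j\psi$ all vanish at the points $R_\alpha$ if and only if $\psi$ does --- here one uses that $\psi(u,R_\alpha)=d_\alpha$ is a constant independent of $u$, so its $u$-derivatives vanish --- the combination is a Baker--Akhiezer function with zero $d$, hence vanishes identically; evaluating at $Q_k$ gives condition (1).

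Next I would prove orthogonality, which is where the involution $\sigma$ and the differential $\Omega$ enter, and which I expect to be the main obstacle. The idea is to form, for fixed $i\neq j$, the meromorphic differential $\omega_{ij} = \partial_i\psi(u,Q)\,\partial_j\psi(u,\sigma Q)\,\Omega(Q)$ on $\Gamma$ and compute the sum of its residues, which must be zero. One checks that $\omega_{ij}$ is regular at the zeros of $\Omega$ (these cancel the poles coming from $\gamma$, $\sigma\gamma$, and the $P_m$-factors since $\Omega$ vanishes at $P_1+\dots+P_n$), regular at $P_i$ and $P_j$ because the essential singularities $e^{k_iu^i}$ from $\partial_i\psi(u,Q)$ and $e^{-k_iu^i}$ from $\partial_j\psi(u,\sigma Q)$ (using oddness of $k_i$ under $\sigma$) cancel while $\Omega$ kills the remaining pole, and vanishes at the poles $R_\alpha$, $\sigma R_\alpha$ of $\Omega$ because $\partial_i\psi$ and $\partial_j\psi$ vanish there. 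Hence the only residues are at the $2(n+N)$ fixed points $P_1,\dots,P_n,R_{l+1},\dots,R_{l+N},Q_1,\dots,Q_{n+N}$, and in fact only at the $Q_k$'s, where $\Omega$ has residue $1$; there the residue of $\omega_{ij}$ is $\partial_i\psi(u,Q_k)\,\partial_j\psi(u,\sigma Q_k) = \partial_i x^k\,\partial_j x^k$ since $\sigma$ fixes $Q_k$. Summing and using the residue theorem yields $\sum_k\partial_i x^k\,\partial_j x^k = 0$, i.e. $\partial_i x\cdot\partial_j x=0$.

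Finally I would assemble the reality statement: by the preceding Lemma with the antiholomorphic involution $\tau$, one has $\psi(u,Q|d)=\overline{\psi(u,\tau Q|d)}$, and since each $Q_k$ is required to be a fixed point of $\tau$, the value $x^k(u)=\psi(u,Q_k)$ is real; the coefficients $c_{ij}^k$ are then real as well. The subtle points to handle carefully are: the exact bookkeeping that the divisor of $\Omega$ is precisely arranged so that $\omega_{ij}$ is holomorphic except at the $Q_k$ (this is the whole point of the prescribed zero/pole divisors and the condition that all residues at $Q_k$ equal $1$); checking genericity so that the $P_m$, $m\neq i,j$, contribute no essential singularity to $\omega_{ij}$ (the exponentials $e^{k_mu^m}$ in $\partial_i\psi(u,Q)$ and $\partial_j\psi(u,\sigma Q)$ do \emph{not} obviously cancel there, so one must instead note that $\partial_i\psi$ and $\partial_j\psi$ each have \emph{no} essential singularity at $P_m$ for $m\neq i$, resp. $m\neq j$, hence the product is regular at $P_m$ for every $m\neq i,j$); and verifying that the auxiliary function in the point-equation argument genuinely lies in the Baker--Akhiezer class with the correct divisors so that uniqueness applies. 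These are the places where the precise hypotheses on $S$, $\sigma$, $\tau$, and $\Omega$ are used in full.
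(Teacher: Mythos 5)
Your overall strategy is exactly the standard one (the paper itself gives no proof of this theorem, citing \cite{Glukhov}; but its own proofs of Lemma 3.2 and the $l=1$ example use precisely your residue technique with differentials of the form $f(u,Q)\,g(u,\sigma Q)\,\Omega(Q)$), and both halves of your argument --- uniqueness of the Baker--Akhiezer function with $d=0$ for the point equation, and the residue theorem for orthogonality --- go through.

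There is, however, one incorrect claim in your discussion of the ``subtle points,'' and it happens to be at the one place where you second-guess the correct mechanism. You assert that at $P_m$ with $m\neq i,j$ the exponentials do not obviously cancel, and that instead one should use that $\partial_i\psi$ has \emph{no} essential singularity at $P_m$ for $m\neq i$. That last statement is false: near $P_m$ one has $\psi(u,Q)=e^{k_mu^m}\bigl(\xi_0^m(u)+\xi_1^m(u)k_m^{-1}+\dots\bigr)$, and applying $\partial_i$ with $i\neq m$ gives $e^{k_mu^m}\bigl(\partial_i\xi_0^m(u)+\dots\bigr)$, which still carries the essential singularity $e^{k_mu^m}$ (it disappears only if $\partial_i\xi_0^m\equiv 0$, which does not hold in general). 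The correct reason is the same one you use at $P_i$ and $P_j$: since $k_m(\sigma Q)=-k_m(Q)$, the factor $\partial_j\psi(u,\sigma Q)$ behaves near $P_m$ like $e^{-k_mu^m}\bigl(\partial_j\xi_0^m(u)+\dots\bigr)$, so the product $\partial_i\psi(u,Q)\,\partial_j\psi(u,\sigma Q)$ is regular and bounded at $P_m$, and the simple zero of $\Omega$ at $P_m$ then makes $\omega_{ij}$ vanish there --- in particular no residue. With that justification repaired, the rest of your bookkeeping (regularity at $\gamma+\sigma\gamma$ from the zeros of $\Omega$, vanishing of the residues at $R_\alpha$, $\sigma R_\alpha$ and $R_{l+1},\dots,R_{l+N}$ because $\psi(u,R_\alpha)=d_\alpha$ is $u$-independent, residues $\partial_i x^k\,\partial_j x^k$ at the $\sigma$-fixed points $Q_k$, and reality via the $\tau$-lemma) is correct.
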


This theorem provides us with an $n$-dimensional orthogonal net $x(u)$ in $\mathbb{R}^{n+N},$ that is constructed according to the algebraic-geometric data $S$. 

%--------------------------------------------------------------------------

\section{Algebraic-Geometry Approach to Ribaucour Transformations}

We choose now $d=(1,\dots,1)$ for the simplicity and obtain the orthogonal net $x(u)$ from the theorem \ref{theorem1}:
$$S\; \rightarrow\; \psi(u,Q)\; \rightarrow x(u).$$

For every $\alpha=1,\dots, l$ the following transformation of algebraic geometrical data 
$$S_{\alpha}=\{\Gamma,\, g;\; \{P_j,\,k_j^{-1}\}_{j=1}^{n};\; R_1,\dots,R_{\alpha-1}, \sigma R_{\alpha}, R_{\alpha+1},\dots, R_{l+N};
\gamma_1,\dots,\gamma_{g+l+N-1}\}$$
defines another orthogonal net $x_{\alpha}(u)$:
$$S_{\alpha}\; \rightarrow\; \psi_{\alpha}(u,Q)\; \rightarrow x_{\alpha}(u).$$

\begin{theorem}
\label{theorem2}
A pair of the orthogonal nets $x(u)$ and $x_{\alpha}(u)$ is a Ribaucour pair.
\end{theorem}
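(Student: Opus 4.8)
Since $x_\alpha$ is again an $n$-dimensional orthogonal net (Theorem~\ref{theorem1} applies to $S_\alpha$ with the very same differential $\Omega$, because swapping $R_\alpha$ with $\sigma R_\alpha$ changes neither the zero divisor nor the pole divisor of $\Omega$), it remains only to exhibit functions $\lambda_1(u),\dots,\lambda_n(u)$ for which \eqref{RibTrans} holds with $\tilde x=x_\alpha$. The plan is to work throughout with the difference $\delta\psi(u,Q):=\psi_\alpha(u,Q)-\psi(u,Q)$ of the two Baker--Akhiezer functions, so that $\delta x^k=\delta\psi(u,Q_k)$, and to obtain the scalar quantities entering \eqref{RibTrans} as residues of meromorphic differentials built from $\Omega$, in the spirit of Krichever's original computation.

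First I would pin down the analytic structure of $\delta\psi$. Since $\psi$ and $\psi_\alpha$ share $\Gamma$, the points $P_j$ with their local parameters, and the pole divisor $\gamma$, and since both take the value $1$ at every $R_\beta$ with $\beta\ne\alpha$, the function $\delta\psi(u,\cdot)$ lies, for each fixed $u$, in the space of functions having essential singularities of Baker--Akhiezer type at the $P_j$, poles bounded by $\gamma$ outside them, and a zero at every $R_\beta$ with $\beta\ne\alpha$; by the index count that underlies the uniqueness of the Baker--Akhiezer function this space is one-dimensional for generic data. At the two exceptional points of the divisor $R$ one records the identities $\delta\psi(u,\sigma R_\alpha)=1-\psi(u,\sigma R_\alpha)$ and $\delta\psi(u,R_\alpha)=\psi_\alpha(u,R_\alpha)-1$; the first of them is the one that will force the Ribaucour relation.

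Next comes the residue computation. For functions $f,h$ of the above singularity type at the $P_j$ — allowing $f$ at most one extra simple pole at a single $P_i$, which is exactly the case for $f=\partial_i\psi$ — the product $f(Q)\,h(\sigma Q)\,\Omega(Q)$ is a meromorphic differential on $\Gamma$ all of whose poles lie among $Q_1,\dots,Q_{n+N}$, $R_1,\dots,R_{l+N}$, $\sigma R_1,\dots,\sigma R_l$: the essential singularities of $f(Q)$ and of $h(\sigma Q)$ at each $P_j$ cancel because $k_j$ is odd, and the simple zeros of $\Omega$ on $P_1+\dots+P_n+\gamma+\sigma\gamma$ absorb the remaining poles at $P_i$, at $\gamma$ and at $\sigma\gamma$. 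Summing its residues to zero, and using $\Res_{Q_k}\Omega=1$ together with $\sigma Q_k=Q_k$, the relation \eqref{r-residues}, the vanishing of $\delta\psi$ at the $R_\beta$ with $\beta\ne\alpha$, and the vanishing of $\partial_i\psi$ at every $R_\beta$ (since $\psi(u,R_\beta)\equiv 1$ for $d=(1,\dots,1)$), I expect all contributions except those at the $Q_k$ and at $R_\alpha,\sigma R_\alpha$ to drop out, leaving
\[
\delta x\cdot\delta x=-2\,r_\alpha\,\delta\psi(u,R_\alpha)\,\delta\psi(u,\sigma R_\alpha),\qquad \partial_i x\cdot\delta x=-\,r_\alpha\,\partial_i\psi(u,\sigma R_\alpha)\,\delta\psi(u,R_\alpha),
\]
and hence $2\,(\partial_i x\cdot\delta x)/(\delta x\cdot\delta x)=\partial_i\psi(u,\sigma R_\alpha)/\delta\psi(u,\sigma R_\alpha)=:\nu_i(u)$; here the cancellation of $r_\alpha$ and of $\delta\psi(u,R_\alpha)$ requires the transformation to be nondegenerate.

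Finally I would locate $\partial_i\delta\psi$ inside a two-dimensional space. The functions $\partial_i\psi$, $\delta\psi$, $\partial_i\delta\psi$ all have essential singularities of Baker--Akhiezer type at the $P_j$ with $j\ne i$, at most one extra simple pole at $P_i$, poles bounded by $\gamma$ elsewhere, and a zero at every $R_\beta$ with $\beta\ne\alpha$; the same index count makes this space two-dimensional for generic data, with basis $\{\partial_i\psi,\delta\psi\}$ since $\partial_i\psi$ carries a genuine $k_i$-term at $P_i$ while $\delta\psi$ does not. Hence $\partial_i\delta\psi=A_i(u)\,\partial_i\psi+B_i(u)\,\delta\psi$ for scalar functions $A_i,B_i$. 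Evaluating this at $Q=\sigma R_\alpha$ and using $\partial_i\bigl(\delta\psi(u,\sigma R_\alpha)\bigr)=\partial_i\bigl(1-\psi(u,\sigma R_\alpha)\bigr)=-\partial_i\psi(u,\sigma R_\alpha)$ forces $B_i=-(1+A_i)\,\nu_i$; evaluating it at $Q=Q_k$ for $k=1,\dots,n+N$ and using $\partial_i x_\alpha^k=\partial_i\psi(u,Q_k)+\partial_i\delta\psi(u,Q_k)$ then yields the vector identity $\partial_i x_\alpha=(1+A_i)\bigl(\partial_i x-\nu_i\,\delta x\bigr)$, which is precisely \eqref{RibTrans} with $\lambda_i(u)=1+A_i(u)$, because $\nu_i\,\delta x=2\,\frac{\partial_i x\cdot\delta x}{\delta x\cdot\delta x}\,\delta x$; in passing one checks that $\lambda_i$ equals the ratio at $P_i$ of the leading exponential coefficients of $\psi_\alpha$ and $\psi$. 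The step I expect to be most delicate is the singularity bookkeeping behind the two dimension counts — confirming that the auxiliary differentials are single-valued with exactly the stated poles and that the imposed vanishing conditions are independent, i.e. making the genericity hypotheses precise; once that is secured, the key relation $B_i=-(1+A_i)\nu_i$ drops out of the single evaluation at $\sigma R_\alpha$ and the argument closes.
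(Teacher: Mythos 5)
Your proposal is correct and follows essentially the same route as the paper: the two residue identities you derive are exactly the paper's equations \eqref{scalar up} and \eqref{scalar down}, and your expansion $\partial_i\delta\psi=A_i\,\partial_i\psi+B_i\,\delta\psi$ with $B_i$ fixed by evaluation at $\sigma R_\alpha$ is the paper's Lemma~\ref{BA connection lemma} in a slightly different normalization (the paper packages the same linear dependence as the statement that $\Phi_{i,\alpha}=\xi^i_0\,\partial_i\psi_\alpha-\xi^i_{0,\alpha}\,\partial_i\psi$ is proportional to $\psi_\alpha-\psi$, identified via uniqueness of the Baker--Akhiezer function with zero data). Your closing formula $\lambda_i=1+A_i=\xi^i_{0,\alpha}/\xi^i_0$ agrees with the paper's.
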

\begin{proof}
To prove the theorem we need two lemmas.

Let us denote the first terms of the expansion of $\psi_{\alpha}(u,Q)$  at $P_j$ by $\xi^j_{0,\,\alpha}(u)$, i.e. the expansion in the neighborhood of $P_j$ has the following form:
$$\psi_{\alpha} (u,Q)=e^{k_ju^j}\left(\xi^j_{0,\,\alpha}(u)+\frac{\xi^j_{1,\,\alpha}(u)}{k_j}+\dots			\right).$$

We consider the function
\begin{equation*}
\Phi_{i,\,\alpha}(u,Q) = \xi^i_0(u) \partial_i \psi_{\alpha} (u, Q) - \xi^i_{0,\,\alpha} (u) \partial_i \psi(u,Q).
\end{equation*}
It is straightforward that $\Phi_{i,\,\alpha}(u,Q)$ is an $n$-point Baker--Akhiezer function with non-constant values at the points of the divisor $R$. The values are 
$$
\Phi_{i,\,\alpha}(u,R_k)=0,\; k\neq\alpha;$$  
$$\Phi_{i,\,\alpha}(u,R_{\alpha})=\xi^i_{0} (u) \partial_i\phi_{\alpha,\,\alpha}(u);\; \Phi_{i,\,\alpha}(u,\sigma R_{\alpha})=-\xi^i_{0,\,\alpha} (u)  \partial_i\phi_{\alpha}(u),$$
where $\phi_{\alpha}(u) = \psi(u,\sigma R_{\alpha})$, $\phi_{\alpha,\alpha}(u)= \psi_{\alpha}(u,R_{\alpha})$.

\begin{lemma}
\label{BA connection lemma}
The function $\Phi_{i,\,\alpha}(u,Q)$ on the curve $\Gamma$ is proportional to the function $\psi_{\alpha}(u,Q) - \psi(u,Q)$:
\begin{equation}
\label{BA connection}
\Phi_{i,\,\alpha}(u,Q) = \xi^i_{0,\alpha}(u) \dfrac{\partial_i \phi_{\alpha}(u)}{\phi_{\alpha}(u) - 1} \left(\psi_{\alpha}(u,Q) - \psi(u,Q)\right).
\end{equation}
\end{lemma}

\begin{proof}
Let us calculate the values of the right side at the points in the divisor $R-R_{\alpha}$ and at the point $\sigma R_{\alpha}$:
$$\xi^i_{0,\alpha}(u) \dfrac{\partial_i \phi_{\alpha}(u)}{\phi_{\alpha}(u) - 1} \left(\psi_{\alpha}(u,R_k) - \psi(u,R_k)\right) = 0,\, k\neq\alpha;$$ 
$$\xi^i_{0,\alpha}(u) \dfrac{\partial_i \phi_{\alpha}(u)}{\phi_{\alpha}(u) - 1} \left(\psi_{\alpha}(u,\sigma R_{\alpha}) - \psi(u,\sigma R_{\alpha})\right) = - \xi^i_{0,\alpha}(u)\partial_i \phi_{\alpha}(u).$$

Thus, the difference of two sides of the equality (\ref{BA connection}) is an $n$-point Baker--Akhiezer function for $S_{\alpha}$ with all the constants $d_i$ equal to zero, which means that the difference equals to zero.
\end{proof}

To express the coefficient on the right side of (\ref{BA connection}) we prove the following lemma. 

\begin{lemma}
\label{coef lemma}
The function $\phi_{\alpha}(u)$ relates to the embedding functions $x(u)$ and $x_{\alpha}(u)$ in the following way
\begin{equation*}
\dfrac{\partial_i \phi_{\alpha}}{\phi_{\alpha}-1}=-2\dfrac{\partial_i x\cdot\delta x}{\delta x\cdot \delta x}.
\end{equation*}
\end{lemma}

\begin{proof}
Let us consider the differential form 
$$\partial_i \psi(u,Q) \left(\psi_{\alpha}(u,\sigma Q)-\psi(u,\sigma Q)\right) \Omega(Q).$$ 
One can show that all the essential singularities of function are not ones for the differential. Then the differential is meromorphic and it has only simple poles $Q_1,\dots,Q_n,\,\sigma R_{\alpha}$.

All the residues of the differential sum up to zero:
$$\sum\limits_{s=1}^{n+N} \partial_i x^s(u) \left(x_{\alpha}^s(u)-x^s(u)\right) + r_{\alpha} \partial_i \phi_{\alpha}(u)\left(\phi_{\alpha,\,\alpha}(u)-1\right)=0,$$
where $r_{\alpha}$ as in (\ref{r-residues}).
We get
\begin{equation}
\label{scalar up}
\partial_i x\cdot \delta x  = - r_{\alpha} \partial_i \phi_{\alpha}\left(\phi_{\alpha,\,\alpha}-1\right).
\end{equation}

Let us consider the differential $$\left(\psi_{\alpha}(u,Q)-\psi(u,Q)\right) \left(\psi_{\alpha}(u,\sigma Q)-\psi(u,\sigma Q)\right) \Omega(Q).$$ 
It is also meromorphic  and has only simple poles $Q_1,\dots,Q_n,\,R_{\alpha},\,\sigma R_{\alpha}$.

All the residues of the differential sum up to zero:
$$\sum\limits_{s=1}^{n+N} \left(x_{\alpha}^s(u)-x^s(u)\right)^2 + 2 r_{\alpha}\left(\phi_{\alpha,\,\alpha}(u)-1\right)\left(1- \phi_{\alpha}(u)\right)=0.$$
We get 
\begin{equation}
\label{scalar down}
\delta x\cdot\delta x  = 2 r_{\alpha} \left(\phi_{\alpha}-1\right)\left(\phi_{\alpha,\,\alpha}-1\right).
\end{equation}

Dividing (\ref{scalar up}) by (\ref{scalar down}) we prove the lemma.
\end{proof}

To prove the main theorem we consider the values of (\ref{BA connection}) at the points $Q_k$, $k=1,\dots,n+N$:
$$\xi_0^i \partial_i x_{\alpha}^k - \xi_{0,\,\alpha}^i \partial_i x^k =\xi^i_{0,\,\alpha} \dfrac{\partial_i \phi_{\alpha}}{\phi_{\alpha}-1} \left(x_{\alpha}^k- x^k\right),\; k = 1,\dots,n+N.$$

In the vector form we have
$$\xi_0^i \partial_ix_{\alpha} - \xi_{0,\,\alpha}^i \partial_i x = \xi_{0,\,\alpha}^i \dfrac{\partial_i \phi_{\alpha}}{\phi_{\alpha}-1} \delta x.$$

Using lemma \ref{coef lemma} we obtain (\ref{RibTrans}) with $\lambda_i(u)=\dfrac{\xi_{0,\,\alpha}^i(u)}{\xi_0^i(u)}$ and prove the theorem.

\end{proof}

If $l>1$, then we construct two different Ribaucour transformation of $x(u)$: $x_{\alpha}(u)$ and $x_{\beta}(u)$ according to transformed algebraic-geometric data $S_{\alpha}$ and $S_{\beta}$ respectively. We transform the transformed algebraic-geometric data and obtain corresponding orthogonal net in the following way: 
$$S_{\alpha\beta}=\left(S_{\alpha}\right)_{\beta}=\left(S_{\beta}\right)_{\alpha}\rightarrow x_{\alpha\beta}(u).$$
Thus, we have Bianchi quadrilaterals:
$$
\begin{matrix}
S_{\beta}& \rightarrow & S_{\alpha\beta} &\qquad& x_{\beta}(u) & \rightarrow   & x_{\alpha\beta}(u)  \cr
 \uparrow &    & \uparrow &\qquad&  \uparrow &   & \uparrow \cr
S    & \rightarrow & S_{\alpha} &\qquad& x(u)   & \rightarrow & x_{\alpha}(u) \cr
\end{matrix}
$$

One can obtain $2^l$ orthogonal nets corresponding to the initial data $S$. Ribaucour pairs are the edges of $l$-cube and form the structure of the cube with 2-faces described above. 

That procedure provides us with an example of a Bianchi $l$-cube. However, the existence of algebraic-geometric data for any Bianchi $l$-cube with the initial orthogonal net $x(u)$ from theorem \ref{theorem1} and $l$ Ribaucour transformations of it from theorem \ref{theorem2} is unclear. This important question requires further investigation.

\section{Examples}

For $l \geq 2$ the explicit formulae are complicated and involve the $\theta$-function of the smooth curve $\Gamma$. For $l=1$ the construction simplifies, and the following theorem describes it .

\begin{theorem}
If $l=1$ and $d_2=\dots=d_{N+1}=0,$ we have the only transformation $x_1(u)$ and
$$x_1(u) = \frac{c}{x(u)\cdot x(u)} x(u),$$ where $c\in\mathbb{R}$ defined by $S$.
\end{theorem}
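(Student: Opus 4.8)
The plan is to specialize the general machinery to the case $l=1$, where the divisor $R = R_1 + \dots + R_{N+1}$ has a single ``transformable'' point $R_1$ (the others being fixed by $\sigma$), and to identify $\phi_1(u) = \psi(u, \sigma R_1)$ explicitly. When $d = (1, 0, \dots, 0)$ the Baker--Akhiezer function $\psi(u,Q)$ satisfies $\psi(u,R_1) = 1$ and $\psi(u,R_\alpha) = 0$ for $\alpha = 2,\dots,N+1$; in particular the $N$ conditions $\psi(u, R_\alpha) = 0$ together with $\psi(u, R_1) = 1$ pin down $\psi$. First I would observe that, because $R_2,\dots,R_{N+1}$ are fixed by $\sigma$, the $N$ vanishing conditions are $\sigma$-invariant, and I would use the even differential $\Omega$ to relate $\psi(u,Q)$ and $\psi(u,\sigma Q)$: the combination $\psi(u,Q)\psi(u,\sigma Q)\,\Omega(Q)$ is a meromorphic differential whose poles and residues I can read off, and the residue theorem will produce an algebraic identity linking $\phi_1(u) = \psi(u,\sigma R_1)$, the embedding $x(u) = (\psi(u,Q_1),\dots,\psi(u,Q_{n+N}))$, and $x(u)\cdot x(u) = \sum_k \psi(u,Q_k)^2$.

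The key computation is therefore to expand $\psi(u,Q)\,\psi(u,\sigma Q)\,\Omega(Q)$. Its essential singularities at the $P_j$ cancel because $k_j$ is odd (the $e^{k_j u^j}$ from $\psi(u,Q)$ cancels the $e^{-k_j u^j}$ from $\psi(u,\sigma Q)$, exactly as in the proof of Lemma~\ref{coef lemma}), so it is genuinely meromorphic, with simple poles only at $Q_1,\dots,Q_{n+N}$ (residue $1$ in $\Omega$, value $\psi(u,Q_k)\psi(u,\sigma Q_k) = \psi(u,Q_k)^2$ since $Q_k$ is $\sigma$-fixed) and at $R_1, \sigma R_1$ (residue $r_1$... wait, for $l=1$ the residues $r_\alpha$ were defined only for $\alpha = 1,\dots,l$, so here $r_1 = \Res_{R_1}\Omega = \Res_{\sigma R_1}\Omega$), where the values are $\psi(u,R_1)\psi(u,\sigma R_1) = \phi_1(u)$ and $\psi(u,\sigma R_1)\psi(u,R_1) = \phi_1(u)$ again. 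The fixed points $R_2,\dots,R_{N+1}$ of $\Omega$'s polar divisor contribute $\psi(u,R_\alpha)^2 = 0$. Summing residues to zero yields $x(u)\cdot x(u) + 2 r_1 \phi_1(u) = 0$, i.e. $\phi_1(u) = -\dfrac{x(u)\cdot x(u)}{2 r_1}$, an explicit formula. Then I would apply Lemma~\ref{coef lemma} with $\delta x = x_1 - x$: the relation $\dfrac{\partial_i \phi_1}{\phi_1 - 1} = -2\dfrac{\partial_i x\cdot \delta x}{\delta x\cdot \delta x}$ together with the analogous residue identities (\ref{scalar up}), (\ref{scalar down}) specialized to $l=1$, $d_2 = \dots = d_{N+1} = 0$, should collapse to a scalar ODE system for $x_1$ in terms of $x$.

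Next I would integrate (\ref{RibTrans}) directly. The guess $x_1 = \dfrac{c}{x\cdot x}\,x$ can be checked by computing $\partial_i x_1 = \dfrac{c}{x\cdot x}\partial_i x - \dfrac{2c\,(x\cdot\partial_i x)}{(x\cdot x)^2} x$ and comparing with the right-hand side of the Ribaucour equation, where $\delta x = x_1 - x = \left(\dfrac{c}{x\cdot x} - 1\right) x$ is parallel to $x$; then $\delta x\cdot \delta x = \left(\dfrac{c}{x\cdot x}-1\right)^2 (x\cdot x)$ and $\partial_i x\cdot \delta x = \left(\dfrac{c}{x\cdot x}-1\right)(x\cdot \partial_i x)$, so the bracketed expression in (\ref{RibTrans}) becomes $\partial_i x - 2\dfrac{x\cdot\partial_i x}{x\cdot x} x$, and matching gives $\lambda_i = \dfrac{c}{x\cdot x}$ for every $i$, i.e. a \emph{single} scalar factor — consistent with $\phi_1$ being a pure function of $x\cdot x$. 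The constant $c$ is then fixed by $c = -2 r_1$ (or by the normalization hidden in $\phi_{1,1}$), which is data determined by $S$; I would close by noting $x\cdot x \not\equiv 0$ generically so the formula is well-defined, and that the orthogonal-net axioms for $x_1$ follow from Theorem~\ref{theorem2} (being a Ribaucour transformation of an orthogonal net).

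The main obstacle I anticipate is bookkeeping the exact value of the constant $c$ and verifying that $\lambda_i$ is genuinely $i$-independent without circular reasoning: one must show the $2^{N+1}$-fold degeneracy of conditions forces $\xi^i_{0,1}/\xi^i_0$ to be the same for all $i$, which amounts to showing $\psi_1(u,Q)$ and $\psi(u,Q)$ differ by the \emph{same} scalar multiple of $\psi(u,\sigma R_1)$-type contribution in every essential-singularity expansion — equivalently that $\Phi_{i,1}(u,Q)/\xi^i_{0,1}(u)$ from Lemma~\ref{BA connection lemma} is independent of $i$, which indeed it is since the right-hand side of (\ref{BA connection}) is. Threading that observation through to conclude $x_1$ is a conformal multiple of $x$ is the delicate point; the rest is the residue calculus already rehearsed in the proof of Theorem~\ref{theorem2}.
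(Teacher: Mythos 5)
Your residue computation for $\psi(u,Q)\,\psi(u,\sigma Q)\,\Omega(Q)$ is exactly the paper's second step and correctly yields $\phi_1(u)=-\dfrac{x\cdot x}{2r_1}$, hence $c=-2r_1$. But the heart of the theorem --- that $x_1(u)$ is actually \emph{proportional} to $x(u)$ --- is never established in your argument, and this is where the hypothesis $d_2=\dots=d_{N+1}=0$ does its real work. The paper's proof is a one-line uniqueness argument: the function $\dfrac{1}{\phi_1(u)}\psi(u,Q)$ has the same analytic properties in $Q$ as a Baker--Akhiezer function for $S_1$, takes the value $1$ at $\sigma R_1$, and still takes the value $0$ at $R_2,\dots,R_{N+1}$ \emph{precisely because} those normalization constants were zero (had they been nonzero, dividing by $\phi_1(u)$ would destroy the constancy of the values there). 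Uniqueness of the Baker--Akhiezer function then forces $\psi_1=\psi/\phi_1$, hence $x_1=x/\phi_1$. You state the normalization $d=(1,0,\dots,0)$ at the outset but never invoke uniqueness, so the proportionality is left unproved.

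Your substitute for this step does not close the gap. Verifying that the guess $x_1=\dfrac{c}{x\cdot x}\,x$ satisfies the Ribaucour system (\ref{RibTrans}) with $\lambda_i=\dfrac{c}{x\cdot x}$ is consistency, not identification: (\ref{RibTrans}) with unspecified $\lambda_i$ admits many solutions (every Ribaucour transform of $x$ satisfies it), so exhibiting one solution of the right form says nothing about which net the data $S_1$ actually produces. Your fallback --- that $\Phi_{i,1}(u,Q)/\xi^i_{0,1}(u)$ is $i$-independent ``since the right-hand side of (\ref{BA connection}) is'' --- is also false as stated: dividing the right-hand side of (\ref{BA connection}) by $\xi^i_{0,\alpha}$ leaves $\dfrac{\partial_i\phi_\alpha}{\phi_\alpha-1}\left(\psi_\alpha-\psi\right)$, which still depends on $i$ through $\partial_i\phi_\alpha$. (For what it is worth, one \emph{could} derive parallelism without the uniqueness argument by also computing the residues of $\psi_1(u,Q)\psi_1(u,\sigma Q)\Omega(Q)$ and $\psi(u,Q)\psi_1(u,\sigma Q)\Omega(Q)$ and combining the three identities with Cauchy--Schwarz, but that is not what you wrote.) The fix is simple: insert the uniqueness argument for $\psi_1=\psi/\phi_1$ first; everything after that in your write-up then becomes a verification of something already proved.
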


\begin{proof}
The first step is to prove that 

\begin{equation}
\label{lemma1 l=1}
\psi_1(u,Q) = \frac{1}{\phi_1(u)}\psi(u,Q),
\end{equation}
where $\phi_1(u)=\psi(u,\sigma R_1).$

The function $\frac{1}{\phi_1(u)}\psi(u,Q)$ is a Baker--Akhiezer function and it equals to 1 at the point $\sigma R_1$. Since a Baker--Akhiezer function for data $S_1$ is unique, we have the equality (\ref{lemma1 l=1}).

Thus, the orthogonal nets $x(u)$ and $x_1(u)$ are proportional:
$$x_1(u) = \frac{1}{\phi_1(u)}\ x(u).$$ 

We prove now that 
$$\phi_1(u) = - \dfrac{x(u)\cdot x(u)}{2 r_1}. $$

Consider the differential form $\psi(u,Q) \psi(u,\sigma Q) \Omega(Q)$. It is meromorphic and has only simple poles $Q_1,\dots,Q_n,\,R_1,\,\sigma R_1$. The sum of all the residues vanishes:
$$\sum\limits_{s=1}^{n+N}\left(x^k\right)^2+ 2 r_1 \phi_1=0.$$

That proves the theorem.
\end{proof}

Another way to get explicit examples is to choose a singular curve $\Gamma$ as in \cite{Mironov}. When the curve is reducible and all its irreducible components are rational curves, all the formulae are expressed in elementary functions. For the proper choice of the initial data, one needs to solve a system of linear equations to get $2^l$ orthogonal nets. 

\section*{Acknowledgement}
This work was supported by the Russian Science Foundation under grant no. 18-11-00316.

The author would like to thank scientific advisor Prof. Oleg I. Mokhov for his attention to this work, and helpful and inspirational discussions on the topic.

\bibliographystyle{ieeetr}
\bibliography{Refs}

\end{document}